\title[On the IYB-property in some solvable groups]{On the IYB-property in some solvable\\ groups}
\author{Florian Eisele}
\date{} 
\newtheorem{defi}{Definition}[section]
\newtheorem{remark}[defi]{Remark}
\newtheorem{thm}[defi]{Theorem}
\newtheorem{prop}[defi]{Proposition}
\newcommand{\Z}{\mathbb Z}
\newcommand{\F}{\mathbb F}
\newcommand{\iso}{\cong}
\newcommand{\End}{\operatorname{End}}
\newcommand{\Aut}{\operatorname{Aut}}
\newcommand{\Sym}{\Sigma} 
\newcommand{\id}{{\operatorname{id}}}
\newcommand{\GL}{\operatorname{GL}}
\newcommand{\matZ}[1]{\left( \begin{array}{rr} #1 \end{array} \right)}
\newcommand{\matD}[1]{\left( \begin{array}{rrr} #1 \end{array} \right)}
\newcommand{\vect}[1]{\left( \begin{array}{r} #1 \end{array} \right)}
\newcommand{\Rad}{\operatorname{Rad}}
\newcommand{\Soc}{\operatorname{Soc}}
\begin{document}

\address{%
Vrije Universiteit Brussel\\
Vakgroep Wiskunde\\
Pleinlaan 2\\
B-1050 Brussels\\
Belgium}

\email{feisele@vub.ac.be}

\subjclass{20C05, 16S34}

\keywords{Involutive Yang-Baxter Groups, Integral Group Rings}

\begin{abstract}
A finite group $G$ is called Involutive Yang-Baxter (IYB) if there exists a bijective $1$-cocycle 
$\chi: G \longrightarrow M$ for some $\mathbb Z G$-module $M$. It is known that every IYB-group is solvable, but it is still an open question whether the converse holds. A characterization of the IYB property by the existence of an ideal $I$ in the augmentation ideal $\omega \mathbb Z G$ complementing the set $1-G$ lead to some speculation that there might be a connection with the isomorphism problem for $\mathbb Z G$.
In this paper we show that if $N$ is a nilpotent group of class two and $H$ is an IYB-group of order coprime to that of $N$, then
$N\rtimes H$ is IYB. The class of groups that can be obtained in that way (and hence are IYB) contains in particular Hertweck's famous counterexample to the isomorphism conjecture as well as all of its subgroups. We then investigate what an IYB structure on Hertweck's counterexample looks like concretely. 
\end{abstract}

\maketitle

\section{Introduction}

Recently there has been considerable interest in the problem of characterizing those finite groups $G$ for which there exists a bijective $1$-cocycle
$\chi:\ G \longrightarrow M$ ($M$ being a finite $\Z G$-module with $|M|=|G|$).
A group for which such a cocycle exists is called an Involutive-Yang-Baxter group. It is easily seen that preimages of submodules of $M$ are subgroups of $G$,
and hence Hall subgroups of all possible orders exist in $G$ (since submodules of corresponding order exist in $M$). By a well-known theorem of Hall it thus follows that  
IYB-groups are solvable. This was first observed in \cite{EtingofIYB}, which is also the article that first introduced the notion of an IYB-group. Whether, conversely, every solvable group is IYB is an open question. There is an equivalent formulation of the IYB property, stating that a group is IYB if and only if there is a left ideal $I$ in the augmentation ideal $\omega \Z G$ such that the elements of the form $1-g$ form a complement of 
$I$, that is,  they form a set of residue class representatives for the quotient $\omega \Z G / I$. The relation with bijective $1$-cocycles is evident: given such an ideal $I$, the map $G \longrightarrow \omega \Z G / I$ which maps $g$ to $1-g+I$ is a bijective $1$-cocycle. Now if $I$ were a two-sided ideal, then 
$G$ would be the circle group of the radical ring $\omega \Z G / I$ and hence it would be determined by its group ring. But even if $I$ is not two-sided, the group $(1+I)\cap \mathcal U(\Z G)$ complements the trivial units $\pm G$ in the unit group  $\mathcal U(\Z G)$. However it is not clear what implications, if any, the existence of a (non-normal) complement to the trivial units has. In the introduction to \cite{JOCBraces} it is conjectured that there is a connection between the question whether every solvable group is IYB and the integral isomorphism problem. In this short article we show that the counterexample to the isomorphism problem given by Hertweck in \cite{HertweckIso} cannot serve as an example for a solvable group which is not IYB (and we also exclude a plethora of similarly constructed groups). 
This is done in some generality in Theorem \ref{thm_semidirect_nilpot}, which states that the semidirect product of a nilpotent group of class two with any IYB-group is again IYB. 
A deeper connection with the isomorphism problem therefore seems unlikely. We also give an explicit description of a bijective $1$-cocycle on Hertweck's counterexample to the isomorphism problem. This seems interesting as it shows that in the context of IYB-groups there is apparently nothing special about Hertweck's group, which reinforces the conjecture that all soluble groups might be IYB.

\section{Involutive Yang-Baxter structures and their construction}

\begin{defi}
	\begin{enumerate}
	\item A finite group $G$ is called \emph{Involutive Yang-Baxter (IYB)} if there is a (left) $\Z G$-module $M$ and 
	a bijective $1$-cocycle $\chi: G \longrightarrow M$. We call the pair $(M, \chi)$ an \emph{IYB-structure} on the group $G$.
	\item Assume we are given, in addition to $G$, a group $A$ which acts on $G$ (from the left) by automorphisms.
	If $M$ is a $G\rtimes A$-module and 
	$\chi:\ G \longrightarrow M|_G$ is a bijective $1$-cocycle with the property that
	\begin{equation}
		\chi({^a g}) = a \chi(g) \quad \textrm{ for all $a \in A$ } 
	\end{equation}
	then we call the pair $(M, \chi)$ an \emph{$A$-equivariant IYB-structure} on $G$. Note that a $1$-equivariant IYB-structure simply an ordinary IYB-structure.
	\item Let $(M, \chi)$ and $(M', \chi')$ be two $A$-equivariant IYB-structures on $G$. Then $(M, \chi)$ and $(M',\chi')$ are called isomorphic if 
	there is a $G\rtimes A$-module isomorphism $\varphi: M \longrightarrow M'$ such that $\chi'=\varphi \circ \chi$.
	\end{enumerate}
\end{defi}

The following two propositions show that a semidirect product  $N\rtimes H$ with factors of coprime orders is IYB if and only if 
$N$ has an $H$-equivariant IYB-structure and $H$ is IYB. For a $p$-group that has an automorphism group with a large $p'$-part there may be 
significantly fewer IYB-structures equivariant with respect to these automorphisms than there are IYB-structures in general. This is evidenced for instance by Theorem \ref{thm_upper_triang} below, as well as by abelian groups where the situation is similar. So finding a $p$-group which has no equivariant 
IYB-structure with respect to some solvable $p'$-group acting on it by automorphisms would lead to a solvable group which is not IYB. However, at this point, it is 
unclear whether such a $p$-group exists, and if so, where to look for it.
\begin{prop}[{see also \cite[Theorem 3.4]{CFJIYB}}]
	Let $G= N \rtimes H$ be a finite group. If $H$ is IYB and $N$ has an $H$-equivariant IYB-structure, then $G$ is IYB. 
\end{prop}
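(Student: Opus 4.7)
The plan is to build the $\mathbb{Z}G$-module $M$ and the cocycle $\chi$ for $G = N \rtimes H$ as a direct sum of the data coming from the two factors. Concretely, let $(M_N, \chi_N)$ be the $H$-equivariant IYB-structure on $N$ and $(M_H, \chi_H)$ an IYB-structure on $H$. Set $M = M_N \oplus M_H$, where $M_N$ is already a $\mathbb{Z}G$-module by hypothesis (since $G = N\rtimes H$), and $M_H$ is made into a $\mathbb{Z}G$-module by inflation along the quotient $G \twoheadrightarrow G/N = H$, so that $N$ acts trivially on $M_H$. Since every $g \in G$ factors uniquely as $g = nh$ with $n \in N$ and $h \in H$, I define
\begin{equation*}
\chi(nh) \;=\; \chi_N(n) + \chi_H(h) \;\in\; M_N \oplus M_H.
\end{equation*}
Bijectivity of $\chi$ is immediate from the bijectivity of $\chi_N$ and $\chi_H$ together with the unique factorisation $g = nh$.

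The only real content is checking the cocycle identity $\chi(g_1 g_2) = \chi(g_1) + g_1 \chi(g_2)$. Writing $g_i = n_i h_i$, one has $g_1 g_2 = n_1 \,{}^{h_1}\!n_2 \cdot h_1 h_2$, so the $M_N$-component of $\chi(g_1 g_2)$ unfolds using the cocycle identity for $\chi_N$ as $\chi_N(n_1) + n_1 \chi_N({}^{h_1}\!n_2)$. Here the $H$-equivariance of $\chi_N$ rewrites $\chi_N({}^{h_1}\!n_2) = h_1 \chi_N(n_2)$, turning this component into $\chi_N(n_1) + n_1 h_1 \chi_N(n_2)$, which is exactly the $M_N$-component of the right-hand side. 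For the $M_H$-component, the cocycle identity for $\chi_H$ gives $\chi_H(h_1) + h_1\chi_H(h_2)$, which matches the right-hand side because $n_1$ acts trivially on $M_H$, so that $n_1 h_1 \chi_H(h_2) = h_1 \chi_H(h_2)$.

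There is no serious obstacle: the construction is forced once one decides to use the obvious direct-sum module. The one subtle point, which I would highlight in the write-up, is that two features pull in opposite directions and both are needed. The $H$-equivariance of $\chi_N$ is used precisely to absorb the twisting ${}^{h_1}\!n_2$ in the $M_N$-component, while the triviality of the $N$-action on $M_H$ (the inflation) is what kills the unwanted $n_1$-factor in the $M_H$-component. Once these two compatibilities are in place, the verification is a direct computation, and neither the IYB assumption on $H$ nor any coprimality hypothesis enters beyond the mere existence of $(M_H, \chi_H)$.
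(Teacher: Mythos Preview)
Your proof is correct and follows exactly the same construction as the paper: the module $M = M_N \oplus M_H$ with $N$ acting trivially on $M_H$, and the cocycle $\chi(nh) = (\chi_N(n), \chi_H(h))$. You simply supply the explicit verification of the cocycle identity that the paper leaves to the reader, correctly isolating the two places where the hypotheses enter (the $H$-equivariance of $\chi_N$ for the $M_N$-component, and the trivial $N$-action on $M_H$ for the $M_H$-component).
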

\begin{proof}
	Let $(M_H, \chi_H)$ be an IYB-structure on $H$, and let $(M_N, \chi_N)$ be an $H$-equivariant IYB-structure on $N$. 
	Note that $M_N$ is a $G$-module by definition, and $M_H$ can be construed as a $G$-module by letting $N$ act trivially. Then
	$M := M_N \oplus M_H$ is a $\Z G$-module and 
	$\chi: G \longrightarrow M: n\cdot h \mapsto (\chi_N(n),\chi_H(h)) \textrm{  (where $n\in N$ and $h\in H$)  }$ defines a bijective $1$-cocycle on $G$.
\end{proof}

The following proposition shows that the converse of the above is true if $N$ is a normal Hall-subgroup of $G$, that is, if $(|N|, |H|)=1$.
\begin{prop}
	Assume $G= N\rtimes H$ is IYB and assume that $N$ is a Hall-subgroup of $G$. Then $H$ and $N$ are IYB, and $N$ has an $H$-equivariant IYB-structure.
\end{prop}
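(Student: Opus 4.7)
The plan is to exploit the coprimality $\gcd(|N|,|H|)=1$ to split a given IYB-structure $(M,\chi)$ on $G$ into pieces assigned to $N$ and to $H$. First I would observe that $M$, as a finite abelian group of order $|N|\cdot|H|$ with coprime factors, canonically decomposes as $M=M_N\oplus M_H$, where $M_N$ and $M_H$ collect those elements whose orders involve only primes dividing $|N|$ respectively $|H|$. Being characteristic in $M$, both summands are $\Z G$-submodules.

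Since preimages under $\chi$ of $\Z G$-submodules are subgroups of $G$, the set $\chi^{-1}(M_N)$ is a subgroup of order $|N|$; as $N$ is a normal Hall subgroup, it is the \emph{unique} subgroup of that order, so $\chi^{-1}(M_N)=N$ and $\chi|_N\colon N\to M_N$ is a bijective 1-cocycle, giving the IYB-structure on $N$. To handle $H$, I would consider the projection $\bar\chi:=\pi_H\circ\chi\colon G\to M_H$: since $\bar\chi|_N=0$, expanding $\bar\chi(gn)=\bar\chi((gng^{-1})g)$ via the cocycle identity yields $(gng^{-1})\bar\chi(g)=\bar\chi(g)$, and by normality of $N$ this gives $n'\bar\chi(g)=\bar\chi(g)$ for all $n'\in N$ and $g\in G$. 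Because $\bar\chi$ agrees with the bijection $\chi|_{\chi^{-1}(M_H)}$ on the complement $\chi^{-1}(M_H)$, its image is all of $M_H$, so $N$ acts trivially on $M_H$. Consequently $\bar\chi$ factors through $G/N\iso H$ as a bijective 1-cocycle, making $H$ IYB.

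It then remains to upgrade $(M_N,\chi|_N)$ to an $H$-equivariant IYB-structure on $N$. Writing $\chi_N:=\pi_N\circ\chi$, the restriction $\chi_N|_H\colon H\to M_N$ is a 1-cocycle, and the coprimality $\gcd(|H|,|M_N|)=\gcd(|H|,|N|)=1$ forces $H^1(H,M_N)=0$; hence there is an $m_0\in M_N$ with $\chi_N(h)=(h-1)m_0$ for all $h\in H$. I would then replace $\chi$ by $\tilde\chi(g):=\chi(g)-(g-1)m_0$, which ensures that $\tilde\chi(h)\in M_H$ for every $h\in H$. A short computation starting from the identity $\tilde\chi(hn)=\tilde\chi(({}^h n)h)$ and using the cocycle rule then yields $\tilde\chi|_N({}^h n)=h\tilde\chi|_N(n)$, which is the required $H$-equivariance.

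The one non-routine step will be to verify that $\tilde\chi$ remains bijective after the coboundary correction. The cleanest way is to use the equivalent description of a bijective 1-cocycle $\chi\colon G\to M$ as a regular affine action $g\ast m=gm+\chi(g)$ of $G$ on $M$ whose linear part is the module action: modification of $\chi$ by the coboundary $(g-1)m_0$ corresponds to conjugating this action by the translation $m\mapsto m+m_0$, which preserves regularity and therefore bijectivity of the cocycle. Once this is in place, $(M_N,\tilde\chi|_N)$ is the sought $H$-equivariant IYB-structure on $N$.
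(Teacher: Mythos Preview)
Your argument is correct, and it reaches the same conclusion as the paper by a genuinely different route. The paper first invokes Hall's theorem (available since an IYB group is solvable) to identify $\chi^{-1}(M_H)$ as a conjugate ${}^gH$, and then replaces $\chi$ by $\tilde\chi:=g^{-1}\cdot\chi({}^g-)$; restricting this twisted cocycle to $H$ and to $N$ immediately yields both the IYB structure on $H$ and the $H$-equivariant one on $N$, the equivariance dropping out of a single expansion of $\tilde\chi({}^hn)$ into its $M_N$- and $M_H$-components. You instead obtain the IYB structure on $H$ by showing that the projected cocycle $\pi_H\circ\chi$ descends to $G/N\iso H$ (after proving $N$ acts trivially on $M_H$), and you produce the equivariant structure on $N$ by killing $\chi_N|_H$ with a coboundary, using the vanishing $H^1(H,M_N)=0$ forced by coprimality. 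It is worth noting that the paper's conjugation twist, when unwound, is itself a coboundary modification by $m_0=g^{-1}\chi(g)$, so the two proofs are closer than they first appear: the real difference is that the paper locates the correcting element via conjugacy of Hall complements, whereas you locate it cohomologically. Your approach has the mild advantage of not invoking Hall's theorem, at the cost of a slightly longer verification (triviality of the $N$-action on $M_H$, bijectivity of $\tilde\chi$ via the affine-action picture).
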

\begin{proof}
	Let $(M,\chi)$ be an IYB-structure on $G$. Decompose $M = M_N \oplus M_H$, where $|M_N|=|N|$ and $|M_H|=|H|$ (clearly this is possible 
	since $M$ is an $RG$-module with $R=\Z / |G| \iso \Z / |N| \oplus \Z / |H|$). Since $G$ is IYB it must in particular be solvable, and therefore
	all Hall-subgroups of a given order are conjugate. Moreover the preimages of submodules of $M$ under $\chi$ form subgroups of $G$ (this is an elementary computation).  Since $N$ is a normal Hall-subgroup it follows that $N=\chi^{-1}(M_N)$. Moreover there is a $g\in G$ such that $\chi^{-1}(M_H)={^gH}$. The map
	$\tilde \chi := g^{-1}\cdot \chi({^g -})$ is also a bijective $1$-cocycle, and $\tilde \chi^{-1}(M_H)=H$. Clearly the restricted maps $\tilde\chi|_H: H \longrightarrow M_H$
	and $\tilde\chi|_N: N \longrightarrow M_N$ are bijective $1$-cocycles. All that is left to verify if that $\tilde\chi|_N$ is $H$-equivariant.
	But
	\begin{equation}
		\underbrace{\tilde{\chi} ({^hn})}_{\in M_N} = \underbrace{hn\tilde{\chi}(h^{-1})}_{\in M_H}+\underbrace{h\tilde{\chi}(n)}_{\in M_N} + \underbrace{\tilde{\chi}(h)}_{\in M_H}
	\end{equation}
	Since $M=M_N\oplus M_H$ is a direct sum it follows $\tilde{\chi} ({^hn})=h\tilde{\chi}(n)$. \textit{(As a side note: it also follows that $N$ acts trivially on $M_H$)}
\end{proof}

In order to prove Theorem \ref{thm_semidirect_nilpot} below we need a few facts on the augmentation ideal in an integral group ring. In what follows we denote the 
augmentation ideal in a group ring $R G$ by $\omega RG$ and its $i$-th power by $\omega^i RG$.

\begin{remark}[{see \cite[Lemma 9.3.6 and Corollary 9.3.7]{MiliesSehgal} and \cite[Theorem 7.1]{PassiDimensionSubgroups}}]
\label{rem_dim_subgroup}
	We are going to need the following two well-known facts. Let $G$ be a finite group.
	\begin{enumerate}
	\item There is an isomorphism of abelian groups
	\begin{equation}
		G/[G,G] \stackrel{\sim}{\longrightarrow} \omega \Z G / \omega^2 \Z G:\ g[G,G] \mapsto 1-g+\omega^2 \Z G
	\end{equation}
	\item $\omega^2 \Z G \cap (1-G) = 1-[G,G]$ and $\omega^3 \Z G \cap (1-G) = [[G,G],G]$ (the more general 
	assertion that $\omega^i \Z G \cap (1-G)$ is equal to the $i$-th term in the lower central series is, or rather was, known as the \emph{dimension subgroup conjecture}; it is wrong in general, but true for odd order groups). 
	\end{enumerate}
\end{remark}

\begin{thm}[{see \cite[Theorem 1.24]{SandlingSplitting}}]\label{thm_sandling}
	Let $G$ be a finite group. Then the embedding
	\begin{equation}
		[G,G]/[[G,G],G] \longrightarrow \omega^2 \Z G / \omega^3 \Z G:\ g \mapsto 1-g
	\end{equation}  
	splits (as a homomorphism of abelian groups).
\end{thm}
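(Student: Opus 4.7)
The plan is to realize the embedding of the statement as part of a short exact sequence
\[
0 \longrightarrow [G,G]/[[G,G],G] \stackrel{\iota}{\longrightarrow} \omega^2 \Z G/\omega^3 \Z G \stackrel{f}{\longrightarrow} \omega^2\Z V/\omega^3\Z V \longrightarrow 0
\]
of abelian groups, where $V := G/[G,G]$, $\iota$ is the embedding in question, and $f$ is induced by the ring homomorphism $\Z G \to \Z V$ extending the abelianization map (which sends $\omega^i \Z G$ into $\omega^i \Z V$). The first task is to establish exactness. Injectivity of $\iota$ is precisely the equality $\omega^3\Z G \cap (1-G) = 1-[[G,G],G]$ from Remark \ref{rem_dim_subgroup}(2). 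Surjectivity of $f$ is clear, and the inclusion $\operatorname{im}(\iota) \subseteq \ker(f)$ is immediate. The reverse inclusion is the technical heart of this step: expanding any $x\in\omega^2\Z G$ as a $\Z$-combination of products $(1-g)(1-h)$ and using the identity
\[
1 - [g,h] \equiv (1-h)(1-g) - (1-g)(1-h) \pmod{\omega^3\Z G},
\]
which follows from $[g,h]-1 = g^{-1}h^{-1}(gh-hg)$ and $g^{-1}h^{-1}\equiv 1\pmod{\omega\Z G}$, I would rewrite any $x$ with $f(x)=0$ as a $\Z$-combination of terms $1-c$ with $c\in[G,G]$, producing the needed element of $\operatorname{im}(\iota)$.

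The second step is to construct a section $s$ of $f$. The natural candidate is to pick any set-theoretic lift $\sigma: V \to G$ of the abelianization map and declare
\[
s\bigl((1-\bar v)(1-\bar w)\bigr) := (1-\sigma(\bar v))(1-\sigma(\bar w)) + \omega^3\Z G
\]
on the generators of $\omega^2\Z V/\omega^3\Z V$. Bilinearity in each variable is a routine expansion: whenever $\sigma(\bar v \bar v') = \sigma(\bar v)\sigma(\bar v')\cdot c$ with $c \in [G,G]$, the discrepancy $1-c$ lies in $\omega^2\Z G$ by Remark \ref{rem_dim_subgroup}(2) and therefore vanishes modulo $\omega^3\Z G$ after multiplication by $(1-\sigma(\bar w))\in\omega\Z G$.

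The main obstacle will be the symmetry relation $(1-\bar v)(1-\bar w) = (1-\bar w)(1-\bar v)$, which holds in $\omega^2\Z V/\omega^3\Z V$ because $V$ is abelian but fails after lifting: indeed
\[
(1-\sigma(\bar v))(1-\sigma(\bar w)) - (1-\sigma(\bar w))(1-\sigma(\bar v)) \equiv -\bigl(1-[\sigma(\bar v),\sigma(\bar w)]\bigr) \pmod{\omega^3\Z G},
\]
which is typically a nonzero element of $\operatorname{im}(\iota)$. To circumvent this, I would fix a linear ordering of a finite generating set of $V$, define $s$ only on ordered monomials in those generators, and then verify by a direct but bookkeeping-heavy computation (using the bilinearity obtained above to push every commutation error into $\operatorname{im}(\iota)$ and then showing these errors cancel) that the resulting map is independent of the chosen presentation in $\omega^2\Z V/\omega^3\Z V$. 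Once this is checked, $s$ provides the required section of $f$, and $\omega^2\Z G/\omega^3\Z G = \operatorname{im}(\iota) \oplus \operatorname{im}(s)$, proving the theorem.
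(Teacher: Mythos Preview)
The paper does not actually prove this theorem: it is quoted from Sandling with a citation and then used as a black box in the proof of Theorem~\ref{thm_semidirect_nilpot}. So there is no ``paper's own proof'' to compare against; what matters is whether your outline stands on its own.

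Your short exact sequence is correct, and exactness can be established cleanly. In fact the inclusion $\ker(f)\subseteq\operatorname{im}(\iota)$ follows more directly than you indicate: the kernel $K$ of $\Z G\to\Z V$ is generated as an abelian group by the elements $g(1-c)$ with $c\in[G,G]$, each of which lies in $\omega^2\Z G$ and is congruent to $1-c$ modulo $\omega^3\Z G$; combined with the surjectivity of $\omega^3\Z G\to\omega^3\Z V$ this gives $\ker(f)=(K+\omega^3\Z G)/\omega^3\Z G=\operatorname{im}(\iota)$.

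The real gap is in the second step. Reducing the existence of a splitting to ``a direct but bookkeeping-heavy computation'' showing that the errors ``cancel'' is exactly the content of Sandling's theorem, and you give no reason why they should. Your well-definedness check for $s$ must account for \emph{all} relations holding among the products $(1-\bar v)(1-\bar w)$ in $\omega^2\Z V/\omega^3\Z V$, not merely the symmetry relation. These relations are not transparent: already for $V=(\Z/2)^2=\langle \bar a,\bar b\rangle$ one has $|\omega^2\Z V/\omega^3\Z V|=8$, strictly smaller than $|V\otimes V|=16$, and relations such as $(1-\bar a)(1-\bar b)(1-\bar a\bar b)=0$ impose further identities among the ordered generators $(1-\bar v_i)(1-\bar v_j)$. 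Lifting such a relation to $\Z G$ and checking that the discrepancy lands in $\operatorname{im}(\iota)$ is not automatic, and nothing in your outline explains how the ordering trick handles it. Until that verification is carried out, the argument is a restatement of the problem rather than a proof; this is precisely why the paper defers to Sandling instead of proving the result directly.
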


\begin{thm}\label{thm_semidirect_nilpot}
	Assume $N$ is a nilpotent group of class two with a group $H$ of coprime order acting on it by automorphisms. Then $N$ possesses an $H$-equivariant 
	IYB-structure. In particular, $N\rtimes H$ is IYB if and only if $H$ is IYB.
\end{thm}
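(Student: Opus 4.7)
The ``in particular'' assertion is immediate from Propositions~2.2 and 2.3, so the real task is to construct an $H$-equivariant IYB-structure on $N$. The plan is to exhibit an $H$-stable left ideal $I \subseteq \omega \Z N$ such that $1-N$ is a complete set of coset representatives for $\omega \Z N / I$; the map $\chi(n) := 1-n+I$ will then be the required bijective $1$-cocycle, with $H$-equivariance following from the fact that the $H$-action on $\Z N$ sends $1-n$ to $1-{}^hn$, combined with $H$-stability of $I$. Since $N$ has nilpotency class~$2$, Remark~\ref{rem_dim_subgroup}(2) gives $\omega^3 \Z N \cap (1-N) = 0$, so $1-N$ embeds into $V := \omega \Z N / \omega^3 \Z N$. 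Hence it suffices to find an $N \rtimes H$-submodule $\bar I \subseteq V$ complementing the image of $1-N$ and to take $I$ to be its preimage in $\omega \Z N$.

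I will look for $\bar I$ inside $U := \omega^2 \Z N / \omega^3 \Z N \subseteq V$. Two standard observations drive the construction: $N$ acts trivially on $U$ (since $(n-1)x \in \omega^3 \Z N$ for $x \in \omega^2 \Z N$), so every subgroup of $U$ is automatically a $\Z N$-submodule of $V$; and $U$ is a finite $|N|$-torsion abelian group (via the inclusion $|N| \cdot \omega^i \Z N \subseteq \omega^{i+1} \Z N$). Theorem~\ref{thm_sandling} provides a splitting of abelian groups of the sequence
\[ 0 \to [N,N] \xrightarrow{g \mapsto 1-g} U \to U/[N,N] \to 0. \]
The decisive step, and the one I expect to be the main obstacle, is upgrading this to a splitting of $\Z H$-modules. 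The extension class lies in $\mathrm{Ext}^1_{\Z H}(U/[N,N], [N,N])$, which is annihilated both by $|H|$ (via the restriction--corestriction argument) and by $|N|$ (both terms being $|N|$-torsion), so the coprimality hypothesis $\gcd(|N|, |H|) = 1$ forces it to vanish. Let $C$ denote the resulting $H$-equivariant complement of $1-[N,N]$ in $U$, and set $\bar I := C$.

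To finish, I check that $\overline{1-N}$ complements $\bar I = C$ in $V$. Given $v \in V$, the isomorphism $N^{ab} \stackrel{\sim}{\to} V/U$, $g[N,N] \mapsto 1-g+U$, of Remark~\ref{rem_dim_subgroup}(1) produces some $n \in N$ with $v - (1-n) \in U$; the Sandling decomposition $U = (1-[N,N]) \oplus C$ then writes $v - (1-n) = (1-z) + c$ with $z \in [N,N]$ and $c \in C$; and the class-$2$ identity $(1-n) + (1-z) \equiv 1-nz \pmod{\omega^3 \Z N}$, which holds because $(1-z) \in \omega^2 \Z N$ by Remark~\ref{rem_dim_subgroup}(2) and so $(1-n)(1-z) \in \omega \Z N \cdot \omega^2 \Z N \subseteq \omega^3 \Z N$, rewrites this as $v = (1-nz) + c$. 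Uniqueness of the pair $(nz, c)$ is straightforward from $\omega^3 \Z N \cap (1-N) = 0$ combined with the directness of the two sum decompositions used, completing the construction.
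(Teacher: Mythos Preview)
Your proof is correct and follows essentially the same strategy as the paper's: use Sandling's splitting of $1-[N,N]$ inside $\omega^2\Z N/\omega^3\Z N$, upgrade it to an $H$-equivariant splitting via the coprimality hypothesis, and take the preimage in $\omega\Z N$ as the complementing ideal. The only cosmetic differences are that the paper first reduces to a single prime and works over $\Z_p$ (using the explicit averaging trick $\widehat\pi=\frac{1}{|H|}\sum_h h^{-1}\pi(h\,\cdot\,)$ in place of your $\mathrm{Ext}$ vanishing argument), and it verifies that $1-N$ complements $I$ by a cardinality count rather than your explicit coset-representative computation.
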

\begin{proof}
	We can assume without loss that $N$ is a $p$-group for some prime $p$ which does not divide the order of $H$. Let $\Z_p$ denote the $p$-adic integers.
	Consider the $\Z_p H$-module $M := \omega^2 \Z_p N / \omega^3 \Z_pN$. 
	We claim that the set $1-N'+\omega^3 \Z_pN$ (that is, the elements $1-n + \omega^3 \Z_pN$ for $n \in N'=[N,N]$) form an $H$-submodule of $M$. Clearly $H$ maps elements of the form
	$1-n$ to elements of the same form, so all we need to check is that these elements form a $\Z_p$-submodule  (of course this was already implicitly used in the formulation of Theorem \ref{thm_sandling}). So assume $n, m \in N' \subseteq Z(N)$. We get
	\begin{equation}
		(1-n) + (1-m) = (1-mn) + (1-m)(1-n) \equiv 1-mn \mod \omega^3 \Z_p N 
	\end{equation} 
	Now we will show that $1-N'+\omega^3 \Z_pN$ is complemented in $M$ as a $\Z_pH$-module. Since every $\Z_pH$-module is relatively $1$-projective it follows that
	$1-N'+\omega^3 \Z_pN$ is complemented as a $\Z_pH$-module if and only if it is complemented as a $\Z_p$-module. A more elementary way of stating this is that if 
	$\pi \in \End_{\Z_p}(\omega^2 \Z_pN/\omega^3\Z_p N)$ is a projection to $1-N'+\omega^3 \Z_pN$, then
	\begin{equation}
		\widehat \pi:\ \omega^2 \Z_pN/\omega^3\Z_p N \longrightarrow \omega^2 \Z_pN/\omega^3\Z_p N: \  x \mapsto \frac{1}{|H|} \sum_{h\in H} h^{-1} \pi(h\cdot x) 
	\end{equation}
	is a projection onto $1-N'+\omega^3 \Z_pN$ as well, but $\widehat \pi\in \End_{\Z_p H}(\omega^2 \Z_pN/\omega^3\Z_p N)$. Therefore the kernel of $\widehat \pi$
	is an $H$-module complement for $1-N'+\omega^3 \Z_pN$.
	The existence of a $\Z_p$-module complement is precisely what is asserted in Theorem \ref{thm_sandling}.
	Since $N$ acts trivially on $\omega^2\Z_p N / \omega^3 \Z_pN $ the complement for $1-N'+\omega^3 \Z_pN$ we just obtained is automatically 
	a $\Z_p N$-module. Hence we have obtained an $H$-stable ideal $I \leq \omega\Z_pN$  such that $I\cap (1-N) = \emptyset$  (here we use the second part of Remark \ref{rem_dim_subgroup}) and 
	\begin{equation}
		|\omega \Z_p N /I| = |\omega \Z_pN / \omega^2\Z_pN|\cdot |\omega^2 \Z_p N / I| = |G/G'| \cdot |G'| = G
	\end{equation}
	where the first part of Remark \ref{rem_dim_subgroup} was used. It follows that $I$ is a complement for the set $1-N \subset \omega \Z_p N$. Therefore the map
	\begin{equation}
		\chi:\ N \longrightarrow \omega \Z_pN/I:\  n \mapsto 1-n +I
	\end{equation}
	is a bijective $1$-cocycle which is $A$-equivariant due to $I$ being stable under the induced action of $A$ on $\Z_p N$.
\end{proof}

The following is another way of constructing an equivariant IYB-struc\-ture on nilpotent groups of class two, provided the order of the group is odd.
\begin{remark}[{see \cite{AultWattersCircleGroups} or \cite[Proposition 9.4]{JOCBraces}}]\label{rem_cedo}
	For a group $N$ of odd order there is a well-known IYB-structure on $N$, by defining the following addition on $N$
	\begin{equation}
		n_1 + n_2 := n_1 n_2 \sqrt{\left [n_2,n_1 \right ]}
	\end{equation}
	and letting $N$ act from the left by the formula $^{n_1}n_2 := n_1n_2+n_1^{-1}$ (owed to the fact that the elements of $N$, when construed as an $N$-module, should correspond to the elements $1-n$ in the corresponding quotient of the augmentation ideal of the group ring).
	It is clear by definition that this IYB-structure is $\Aut(N)$-equivariant. Of course this construction only works when $N$ is of odd order, because only then will square roots of group elements necessarily exist.
\end{remark}

The following  proposition is one of the ingredients required in order to construct an explicit IYB-structure on the counter-example to the isomorphism problem given by Hertweck. It is a slight generalization of \cite[Corollary 3.5]{CFJIYB}, the latter stating that given two groups $G$ and $H$, both being IYB and $H$ being a permutation group, their wreath product 
$G \wr H$ will be IYB as well. 
\begin{prop}\label{prop_wreath}
	Let $G$ be a finite group and let $A$ be a group acting on $G$ by automorphisms. Assume $G$ has an $A$-equivariant IYB-structure. 
	Then $G^{(n)} := G\times \cdots \times G$ ($n$ factors) has an $A \wr \Sigma_n$-equivariant IYB-structure (where $\Sigma_n$ denotes the symmetric group on $n$ points).
\end{prop}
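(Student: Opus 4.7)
The plan is to build the desired structure by taking the ``$n$-fold direct sum'' of the given $A$-equivariant IYB-structure $(M,\chi)$ on $G$ and observing that the symmetric group enters for free because all relevant maps are defined coordinate-wise. Concretely, set $M^{(n)} := M \oplus \cdots \oplus M$ ($n$ summands) and define
\begin{equation}
\chi^{(n)}:\ G^{(n)} \longrightarrow M^{(n)}:\ (g_1,\ldots,g_n) \longmapsto (\chi(g_1),\ldots,\chi(g_n)).
\end{equation}
Bijectivity of $\chi^{(n)}$ is immediate from bijectivity of $\chi$, and the $1$-cocycle property holds coordinate-wise.

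Next I would equip $M^{(n)}$ with a $G^{(n)} \rtimes (A \wr \Sigma_n)$-module structure as follows: $G^{(n)}$ and $A^n$ act diagonally (the $i$-th summand being acted upon by the $i$-th factor), and $\Sigma_n$ acts by permuting the summands, i.e.\ $\sigma \cdot (m_1,\ldots,m_n) = (m_{\sigma^{-1}(1)},\ldots,m_{\sigma^{-1}(n)})$. The verification that these actions fit together into a module over $G^{(n)} \rtimes (A \wr \Sigma_n)$ reduces to the observation that permuting coordinates of a product commutes with applying a coordinate-wise operation, provided the operation's parameters are permuted in the same way. This is the content of the semidirect product $A \wr \Sigma_n = A^n \rtimes \Sigma_n$ acting on $G^{(n)}$, and it matches the action on $M^{(n)}$ tautologically.

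It then remains to verify equivariance of $\chi^{(n)}$. For the $A^n$-part, equivariance of each copy of $\chi$ yields $\chi^{(n)}({}^{(a_1,\ldots,a_n)}(g_1,\ldots,g_n)) = (a_1\chi(g_1),\ldots,a_n\chi(g_n))$, which is exactly the $A^n$-action on $\chi^{(n)}(g_1,\ldots,g_n)$. For the $\Sigma_n$-part, both sides of $\chi^{(n)}({}^\sigma(g_1,\ldots,g_n)) = \sigma\cdot \chi^{(n)}(g_1,\ldots,g_n)$ compute to $(\chi(g_{\sigma^{-1}(1)}),\ldots,\chi(g_{\sigma^{-1}(n)}))$. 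Combining these two verifications, $\chi^{(n)}$ is $(A \wr \Sigma_n)$-equivariant, and we are done.

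There is no real obstacle to this proof; the entire argument is structural, and the hardest thing is merely to keep the bookkeeping of the $\Sigma_n$-action straight (in particular the inverse on the indices, which is forced by using a left action). Note that this simultaneously generalizes \cite[Corollary 3.5]{CFJIYB}: taking $A$ trivial recovers the unequivariant wreath product statement, while retaining a nontrivial $A$ gives exactly the extra flexibility needed later when one wants to further extend the base group by an outer action (e.g.\ in the construction of Hertweck's group).
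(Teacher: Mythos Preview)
Your proof is correct and follows essentially the same approach as the paper: build $M^{(n)}=\bigoplus^n M$, define $\chi^{(n)}$ coordinate-wise, and verify equivariance under the natural $A\wr\Sigma_n$-action. The only cosmetic difference is that you check equivariance for the $A^n$- and $\Sigma_n$-parts separately, whereas the paper writes out the verification for a general element $(a_1,\ldots,a_n)\cdot\sigma$ in one line.
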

\begin{proof}
	Let $(M,\chi)$ be an $A$-equivariant IYB-structure on $G$. Then $A\wr \Sym_n$ acts on both $G^{(n)}$ and $M^{(n)} := \bigoplus^n M$ in the natural way, by letting
	$(a_1,\ldots, a_n)\cdot \sigma$ (where $\sigma \in \Sym_n$ and $a_1,\ldots,a_n\in A$) send $(x_1,\ldots, x_n)$ (an element of $M^{(n)}$ or $G^{(n)}$) to
	$(a_1\cdot x_{\sigma^{-1}(1)}, \ldots, a_n \cdot x_{\sigma^{-1}(n)})$. We define
	\begin{equation}
		\chi^{(n)}:\ G^{(n)} \longrightarrow M^{(n)}: \  (g_1,\ldots,g_n) \mapsto (\chi(g_1), \ldots, \chi(g_n))
	\end{equation}
	Clearly this is a bijective 1-cocycle, and $A\wr \Sym_n$-equivariance is also clear, since
	\begin{align*}
		\chi^{(n)}({^{(a_1,\ldots, a_n)\cdot \sigma }} (g_1,\ldots,g_n)) &= (\chi(^{a_1}g_{\sigma^{-1}(1)}), \ldots \chi(^{a_n}g_{\sigma^{-1}(n)})) \\
		&= (a_1\cdot \chi(g_{\sigma^{-1}(1)}), \ldots, a_n\cdot \chi(g_{\sigma^{-1}(n)})) \\
		&= {{(a_1,\ldots,a_n)\cdot \sigma}\cdot  \chi^{(n)}(g_1,\ldots, g_n)} \tag*{\qedhere}
	\end{align*}
\end{proof}

It is of course trivial that if $G$ has an $A$-equivariant IYB-structure, and $\varphi: B \longrightarrow A$ is a homomorphism from another group into $A$,
then  $G$ has a $B$-equivariant structure, where $b\in B$ acts on $G$ in the same way as $\varphi(b)$. So it follows more generally that
if a finite group $H$ is IYB and it acts on $G^{(n)}$ through automorphisms induced by elements of $A\wr \Sigma_n$, then $G^{(n)}\rtimes H$ is IYB. We can also 
write down its IYB-structure explicitly provided we know (explicitly) a $B$-equivariant IYB-structure on $G$ and an IYB-structure on $H$.

\section{Hertweck's counterexample to the isomorphism problem}

In \cite{HertweckIso}, M. Hertweck gave an example of a finite group $X$ such that $\Z X \iso \Z Y$ for some other finite group $Y$ not isomorphic to $X$. On the other hand it is well-known that the circle group of a radical ring (a concept closely related to IYB-groups) has the property that $\Z X \iso \Z Y$ implies $X \iso Y$ (see for instance \cite[Chaper 9.4]{MiliesSehgal}). Therefore there was some hope that Hertweck's group $X$ might be an example of a solvable group which is not IYB. However this is not the case. Hertweck's counterexample $X$ is a semidirect product $Q \rtimes P$, where $Q$ is a $97$-group of nilpotency class two, and $P$ is a $2$-group which is a semidirect product of two abelian groups. By Theorem \ref{thm_semidirect_nilpot} it is already clear that $X$ is indeed IYB.
Note that the group $Y$ with $\Z Y \iso \Z X$ is constructed in the same way as $X$, implying that it is IYB as well. In this section we are going to explain how to explicitly construct an IYB-structure on $X$. There clearly is no problem constructing an IYB-structure on $P$. The group $Q$ is a direct product of four cyclic groups of order $97$, and eight times a group $D$ which is described below. $P$ acts separately on the direct factors involving cyclic groups and those involving $D$, acting through $\Aut(D) \wr \Sym_8$ on the latter. Due to Proposition 
\ref{prop_wreath} it hence suffices to find an IYB-structure on $D$ which is equivariant with respect to the right subgroup of $\Aut(D)$. 

Now let $q$ be a prime such that $q \equiv 1 \mod 4$, and let
\begin{equation}
	D := \left( (d_3:d_3^q) \times (d_2 :d_2^q) \right) \rtimes (d_1:d_1^q)
\end{equation}
where $d_3$ is central and $d_2^{d_1} = d_2 d_3$ (note that this is isomorphic to the $q$-Sylow subgroup of $\GL_2(q)$). Define 
the following automorphisms of $D$:
\begin{equation}
	\tau: \left\{\begin{array}{c} d_1 \mapsto d_2\\ d_2 \mapsto d_1 \\ d_3 \mapsto d_3^{-1} \end{array}\right.\quad
	\alpha_1: \left\{\begin{array}{c} d_1 \mapsto d_1^{\zeta}\\ d_2 \mapsto d_2 \\ d_3 \mapsto d_3^{\zeta} \end{array}\right.\quad 
	\alpha_2: \left\{\begin{array}{c} d_1 \mapsto d_1\\ d_2 \mapsto d_2^{\zeta} \\ d_3 \mapsto d_3^{\zeta} \end{array}\right.
\end{equation}
where $\zeta$ is a generator of the multiplicative group $(\Z/q)^\times$. Note that $\tau^2=\id$, $\tau \notin \langle \alpha_1, \alpha_2 \rangle = \langle \alpha_1\rangle \times \langle \alpha_2 \rangle$ (since the $\alpha_i$ stabilize the subgroup $\langle d_1 \rangle$, but $\tau$ does not). Moreover $\alpha_1^\tau = \alpha_2$ and $\alpha_2^\tau = \alpha_1$. Hence it follows that $A := \langle \tau, \alpha_1, \alpha_2 \rangle$ is isomorphic to $(C_{q-1} \times C_{q-1})\rtimes C_2$. In particular it has order $2\cdot (q-1)^2$. It has a faithful representation over $\F_q^2$, which can be obtained by considering the action of $A$ on
$Q/Z(Q)$:
\begin{equation}
	\resizebox{.9\hsize}{!}{$
	\Delta:\ A \longrightarrow \GL_2(q):\ \tau \mapsto \matZ{0&1\\1&0} \quad \alpha_1 \mapsto \matZ{\zeta&0\\0&1} \quad \alpha_2 \mapsto \matZ{1&0\\0&\zeta}
	$}
\end{equation}
Note moreover that since $q \equiv 1 \mod 4$ the group $A$ contains a $2$-Sylow subgroup of $\Aut(Q)$. To see this note that $\GL_2(q)$ has order 
$(q^2-1)\cdot (q^2-q) = q\cdot 2\cdot (q-1)^2 \cdot \frac{q+1}{2}$, and $\frac{q+1}{2}$ is odd. Therefore $\Delta(A)$ contains a $2$-Sylow subgroup of 
$\GL_2(q)$. The elements in $\Aut(Q)$ which act trivial on $Q/Z(Q)$ must send $d_1$ to $d_1d_3^{i_1}$, $d_2$ to $d_2d_3^{i_2}$ and $d_3 = d_2^{d_1}d_{2}^{-1}$ to $d_3$ for some $i_1, i_2 \in \Z$. It is easily seen that the order of such an automorphism is either $q$ or one. Hence the $q'$-Sylow subgroups of $\Aut(Q)$ have the same order as those of $\GL_{2}(q)$.

Now we would like to describe an $A$-equivariant IYB-structure on $D$. Together with \ref{prop_wreath} and the known construction of an IYB-structure on a semidirect product of abelian groups this is enough to piece together an IYB-structure on the group given by Hertweck. We will indeed see that, up to isomorphism, $D$ has a unique $A$-equivariant IYB structure.
While this uniqueness is not necessary as far as constructing an IYB-structure on Hertweck's counter-example goes, it shows that there is not much room for different
equivariant IYB-structures (whereas computational experiments seem to indicate that non-equivariant IYB-structures on $p$-groups exist in abundance).
\begin{thm}\label{thm_upper_triang}
	Let $M$ be the vector space $\F_q^3$ with the following left action of $D\rtimes A$:
	\begin{equation}
		\resizebox{.9\hsize}{!}{$
		d_1 \mapsto \matD{1&1&0\\0&1&0\\0&0&1} \quad d_2 \mapsto \matD{1&0&1\\0&1&0\\0&0&1} \quad d_3 \mapsto \id  \quad \underbrace{a}_{\in A} \mapsto \det(\Delta(a))\cdot \left(\begin{array}{cc}1 & \begin{array}{cc}0&0\end{array} \\ \begin{array}{c}0\\0\end{array} & \Delta(a^{-1})^\top\end{array}\right)$}
	\end{equation}
	Then the map 
	\begin{equation}\label{eqn_sdjkdjlkjd}
		\chi: D \longrightarrow M: \  d_1^{n_1}d_2^{n_2}d_3^{n_3} \mapsto \vect{n_3 - \frac{1}{2}\cdot  n_1n_2\\ -\frac{1}{2}\cdot n_2 \\ \frac{1}{2} \cdot n_1}
	\end{equation}
	defines an $A$-equivariant IYB-structure on $D$, and $(M,\chi)$ is up to isomorphism the only $A$-equivariant IYB-structure on $D$.
\end{thm}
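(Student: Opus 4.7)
The strategy is to verify existence by direct calculation and to establish uniqueness by combining the submodule/subgroup correspondence for IYB-structures with a weight-theoretic analysis of the action of the $q'$-group $A$.

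For existence, I would first check that the displayed matrices define a representation of $D\rtimes A$: the images $I+E_{12}$ and $I+E_{13}$ of $d_1$ and $d_2$ commute and have $q$-th power equal to the identity (in characteristic $q$), so $d_3=[d_2,d_1]$ maps consistently to $I$; multiplicativity of the $A$-assignment is immediate from $\Delta((ab)^{-1})^\top=\Delta(a^{-1})^\top\Delta(b^{-1})^\top$ and the character property of $\det\Delta$, and compatibility between the two actions is verified on generators. The cocycle identity is then a direct calculation in normal form: bringing $gh$ to normal form via $d_2d_1=d_1d_2d_3$ gives $gh=d_1^{n_1+m_1}d_2^{n_2+m_2}d_3^{n_3+m_3+n_2m_1}$, and the quadratic correction $-\tfrac12 n_1n_2$ in $\chi$ is exactly what absorbs the cross terms coming from expanding $-\tfrac12(n_1+m_1)(n_2+m_2)$ together with the upper-triangular action of $g$ on $\chi(h)$. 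Equivariance is checked on the generators $\tau,\alpha_1,\alpha_2$ of $A$, and bijectivity follows from the explicit inverse $(x,y,z)\mapsto d_1^{2z}d_2^{-2y}d_3^{x-2yz}$, which makes sense because $q$ is odd.

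For uniqueness, let $(M',\chi')$ be any $A$-equivariant IYB-structure on $D$. Since $D$ has exponent $q$ and $|M'|=q^3$, $M'$ is a three-dimensional $\F_q$-vector space on which $D\rtimes A$ acts, and the argument proceeds in three steps. First, identify $\chi'(D')$ as a one-dimensional $A$-submodule $N_0\subset M'$: since $\F_q D$ is local, the socle $(M')^D$ is a nonzero $A$-stable subspace, and Maschke for $A$ yields a one-dimensional $A$-submodule $N_0\subseteq (M')^D$; by the submodule/subgroup correspondence for IYB-structures, $\chi'^{-1}(N_0)$ is an $A$-stable subgroup of $D$ of order $q$, and irreducibility of $\Delta$ on $D/D'$ forces this subgroup to equal $D'$, so that $\chi'(D')=N_0$ and $A$ acts on $N_0$ by $\det\Delta$. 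Second, show that $D$ acts trivially on $M'/N_0$ by a torus-weight argument: splitting $M'=N_0\oplus N'$ as $\F_q A$-modules by Maschke, each $d_i$ acts on the two-dimensional $M'/N_0$ as $I+T_i$ with $T_i^2=0$ (so $\rho(d_i^k)=I+kT_i$), and $A$-equivariance then forces $T_{d_1}$ to be a weight vector of weight $(\zeta,1)$ for the conjugation action of the torus $\langle\alpha_1,\alpha_2\rangle$ on $\End(M'/N_0)$, and similarly $T_{d_2}$ of weight $(1,\zeta)$; but the torus weights appearing in $\End(M'/N_0)\cong\Delta\otimes\Delta^*$ are $(1,1),(\zeta,\zeta^{-1}),(\zeta^{-1},\zeta)$, so that neither $(\zeta,1)$ nor $(1,\zeta)$ appears and hence $T_{d_1}=T_{d_2}=0$. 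The induced $A$-equivariant $\F_q$-linear bijection $D/D'\xrightarrow{\sim}M'/N_0$ then identifies the two sides as $\F_q A$-modules. Third, the off-diagonal scalars $(d_i-1)\chi'(d_j)\in N_0$ describing the $D$-action on $M'$ are determined by the cocycle identity applied to $d_3=[d_2,d_1]$, which (using the triviality of the $D$-action on $N_0$) yields $\chi'(d_3)=(d_2-1)\chi'(d_1)-(d_1-1)\chi'(d_2)$; a suitable choice of basis of $N'$ then produces the desired $D\rtimes A$-module isomorphism $(M',\chi')\cong(M,\chi)$.

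The main obstacle I anticipate is the central weight-theoretic step: ruling out non-trivial unipotent $D$-actions on the quotient $M'/N_0$ rests on the specific pairs of torus weights of the two-dimensional representation $\Delta$. The key input is that the weights $(\zeta,1)$ and $(1,\zeta)$ are non-trivial and distinct (since $\zeta\neq 1$, which holds because $q\geq 5$), so that the weight $(\zeta,1)$ forced on $T_{d_1}$ does not collide with any weight of $\Delta\otimes\Delta^*$; once this is in hand, the remaining identification of the $D$-action and of the cocycle is essentially formal.
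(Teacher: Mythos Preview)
Your existence verification is fine, and the overall uniqueness strategy is sound, but there are two genuine gaps in the argument as written.

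\textbf{First gap.} You assert that $M'$ is an $\F_q$-vector space ``since $D$ has exponent $q$ and $|M'|=q^3$''. This does not follow: a bijective $1$-cocycle is not a homomorphism, so the exponent of $D$ places no direct constraint on the additive exponent of $M'$, and a priori $M'$ could be $\Z/q^3$ or $\Z/q^2\oplus\Z/q$. The paper supplies the missing argument via the $A$-action rather than the $D$-action: since $A$ acts faithfully on $D$ it must act faithfully on $M'$, and the element $\tau\alpha_1\in A$ has order $2(q-1)$; an analysis of $\Z_qC_{2(q-1)}$ then shows that any faithful module of order $q^3$ is necessarily an $\F_q$-vector space. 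You need an argument of this kind here.

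\textbf{Second gap.} Your torus-weight step is circular as stated. You write $\End(M'/N_0)\cong\Delta\otimes\Delta^{*}$ and read off the weights $(1,1),(\zeta,\zeta^{-1}),(\zeta^{-1},\zeta)$, but at this point you have not yet identified $M'/N_0$ with $\Delta$; that identification is only obtained \emph{after} you conclude $T_{d_1}=T_{d_2}=0$. The repair is short: whatever the torus weights of the two-dimensional module $M'/N_0$ are, $\tau$ permutes them by swapping coordinates, so the off-diagonal weights of $\End(M'/N_0)$ are of the form $(\lambda,\lambda^{-1})$ for some $\lambda$; since $(\zeta,1)$ is never of this form (as $\zeta\neq1$), your conclusion $T_{d_i}=0$ survives.

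As a comparison, the paper takes a rather different route to uniqueness: instead of torus weights it repeatedly uses the single observation that $A$ contains an element of order $2(q-1)$ while the group of invertible upper-triangular matrices in $\GL_3(q)$ does not. This one fact simultaneously forces $M$ to be an $\F_q$-vector space, rules out a faithful $D$-action, rules out $\Rad^2(M)\neq0$, and rules out $\dim\Rad(M)=2$, pinning $M$ down as the dual of $\F_qD/\Rad^2(\F_qD)$. Your weight-theoretic approach is more structural and would generalise differently; the paper's is shorter because a single numerical obstruction does all the work.
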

\begin{proof}
	We start with an arbitrary $A$-equivariant IYB-structure $(M,\chi)$ on $D$ and show that it has to be of the claimed form.
	First note that $A$ contains the element $\tau \alpha_1$ of order $2(q-1)$. The $q$-adic group ring $\Z_q C_{2(q-1)}$ decomposes as a direct sum
	of the unramified extension $\Z_q[\zeta_{2(q-1)}]$ and a number of copies of $\Z_q$. The latter correspond to non-faithful representations. 
	Hence any faithful $\Z_q C_{2(q-1)}$-module has a direct summand of the form $\Z_q[\zeta_{2(q-1)}]/q^i$ with $i\geq 1$. Since $A$ acts faithfully on $D$ it must act in such a manner on $M$ as well, forcing $M$ to be the direct sum $\Z_q[\zeta_{2(q-1)}]/q \oplus \Z_q/q$ (as all other possibilities would have a cardinality that is too large). That is, $M$ has to be an $\F_q$-vector space.
	
	Next note that the kernel of the action of $D$ on $M$ must be a normal subgroup of $D$ which is stable under the action of $A$. There are only three such
	subgroups of $D$, namely $1$,  $Z(D)=\langle d_3\rangle$ and $D$ itself. Clearly $D$ must act non-trivially on $M$, since otherwise  $\chi$
	would be an isomorphism between $D$ and the additive group of $M$. If $D$ were to act faithfully on $M$, then it would map onto a $q$-Sylow subgroup
	of $\Aut((M,+))\iso \GL_3(q)$. Then $A$ would be embedded in the normalizer of such a $q$-Sylow subgroup. This is impossible, since such a normalizer is isomorphic to the group of invertible upper triangular $3\times 3$-matrices over $\F_q$, and there is no element of order $2(q-1)$ in that group.
	
	We have hence established that $M$ is a faithful $D/Z(D)$-module. Clearly $M$ cannot be semisimple (because then $D$ would act trivially). Assume \linebreak
	$\Rad^2(M)\neq 0$. Then $M > \Rad(M) > \Rad^2(M) > 0$ would be a flag which must be stabilized by $A$, again forcing $A$ into a subgroup of $\GL_3(q)$ isomorphic to the group of invertible upper triangular matrices, which was impossible. Hence $\Rad^2(M)=0$ and $\Rad(M)$ is either one or two dimensional and semisimple. 
	Assume $\Rad(M)$ is two-dimensional. Then $\Rad(M)=\Soc(M)$ (since otherwise $M$ would be semisimple). But for any $z \in Z(D)$ and any $g\in D$ we have
	$g\chi(z)+\chi(g) =\chi(gz)=\chi(zg)= z\chi(g)+\chi(z)=\chi(g)+\chi(z)$, which implies $\chi(z) \in \Soc(M)$. But then $\chi(z^i)=i\chi(z)$ for all $i\in \Z$, which implies that $\chi(Z(D))$ is a one-dimensional subspace of $\Rad(M)$.  But $A$ must fix this subspace (since $A$ fixes $Z(D)$), and therefore
	$A$ fixes the flag $M > \Rad(M) > \chi(Z(D)) > 0$, which is impossible by the same arguments as before. It follows hence that $\Rad(M)$ is one-dimensional.
	The dual of $M$ is thus an $\F_q D$-module with simple top and radical length two. This means that this dual is an epimorphic image of $\F_q D / \Rad^2(\F_qD)$, and it is in fact isomorphic since the latter also has dimension $3$. Hence $M$ is isomorphic (as an $\F_q D$-module) to the dual of $\F_q D / \Rad^2(\F_qD)$.
	So we can assume without loss that $d_1$ and $d_2$ act on $M$ as claimed.
	Write $\Delta_M:\ D\rtimes A \longrightarrow \GL_3(q)$ for the action of $D\rtimes A$ on $M$.
	Then it follows, using the identity $\Delta_M(a) \cdot \Delta_M(d_i)\cdot \Delta_M(a^{-1}) = \Delta_M({^a d_i})$ for all $a\in A$ and $i\in\{1,2\}$, that an element $a\in A$ acts in the following way
	\begin{equation}
		\left( \begin{array}{c|c}
			\varphi(a) & \begin{array}{cc}g_1(a) & g_2(a) \end{array}\\\hline 
			\begin{array}{c} 0 \\ 0 \end{array} & \varphi(a)\cdot \Delta(a^{-1})^{\top}
		\end{array} \right)
	\end{equation}
	where $\varphi:\ A \longrightarrow \F_q^\times$ is a group homomorphism and $g_1, g_2$ are arbitrary functions from $A$ to $\F_q$. We can conjugate the matrices in such a way that $g_1$ and $g_2$ both become the zero map (just find an $A$-stable complement for the subspace generated by the first standard basis vector).
	
	Since we already know that $Z(D)$ maps into the socle of $M$ we can say that $\chi(d_3)$ must be a non-zero multiple of the first standard basis vector.
	By applying an automorphism of $M$ we can hence choose $\chi(d_3)$ to be the first standard basis vector, and therefore $\chi(d_3^i)=i\cdot \chi (d_3)$ for all $i\in \Z$. The $A$-equivariance of $\chi$ now implies $\varphi = \det \circ \Delta$. Now we determine the images $\chi(d_1)$ and $\chi(d_2)$. The fact that $\alpha_2$ fixes $d_1$ implies that $\chi(d_1)$ lies in the eigenspace of $\Delta_M(\alpha_1)$ with associated eigenvalue one. So $\chi(d_1)=(0,0,x)^\top$ for some $x \in \F_q^\times$. In the same manner one sees that $\chi(d_2)=(0,y,0)^\top$ for some $y \in \F_q^\times$. Using $^\tau d_1 = d_2$ we can conclude $x=-y$ (again by $A$-equivariance). By comparing $\chi(d_1d_2) = d_1\chi(d_2)+\chi(d_1)=(-x,-x,x)^\top$ and $\chi(d_1d_2)=\chi(d_3^{-1}d_2d_1)=
	-\chi(d_3)+d_2\chi(d_1)+\chi(d_2)=(x-1,-x,x)^\top$ we infer that $-x=x-1$, i. e. $x=-1/2$. Formula (\ref{eqn_sdjkdjlkjd}) now follows easily:
	$
		\chi(d_1^{n_1}d_2^{n_2}d_3^{n_3}) = n_3\chi(d_3) + d_1^{n_1} {n_2}\cdot  \chi(d_2)+n_1\cdot \chi(d_1)
	$.
\end{proof}

\section{Concluding remarks}

There are actually not that many groups of small order which cannot be seen to be IYB using Theorem \ref{thm_semidirect_nilpot}. Using the \textit{SmallGroups} library that comes with the computer algebra system \textsc{Gap} (\cite{GAP4}) one can show that the potential counterexamples of order
$\leq 200$ have orders $48$, $96$, $144$, $162$ and $192$ (excluding prime powers). For all except the groups of order $192$ one can compute an IYB-structure using a brute-force approach (a slightly more intelligent approach might actually work for the groups of order 192 as well). For (small) $p$-groups one can use a heuristic approach to compute an IYB-structure. 
Namely, given a $p$-group $P$ one can pick a normal subgroup $N=\langle n \rangle$ of order $p$. Assume we already computed an ideal $I\leq \omega \Z_p G/N$ complementing $1-G/N$. Then we can take the preimage of $I$ under the natural epimorphism $\omega \Z_p G \twoheadrightarrow \omega \Z_p G/N$ and try to find a maximal submodule in this preimage which does not contain $1-n$. If such a maximal submodule exists then it is indeed an ideal in $\omega \Z_p G$ complementing $1-G$. While such a maximal submodule does not always exist, trying this multiple times with different subgroups $N$ and different choices of $I$ will typically work (actually no
potential counterexamples were found in this way). Using an implementation of this heuristic in \textsc{Gap} it was possible to show that all $2$-groups of order up to (and including) $512$ are IYB. Also all other $p$-groups of order strictly less than $1024$ turned out to be IYB. So the evidence that all nilpotent or even all solvable are IYB seems to be piling up.

\paragraph*{Acknowledgments} This research was supported by the \textit{Fonds Wetenschappelijk Onderzoek - Vlaanderen (FWO)  project G.0157.12}. 
I would also like to thank F. Ced\'{o} for looking at a preliminary version of this paper and suggesting Remark \ref{rem_cedo}.
%
\bibliographystyle{alpha}
\bibliography{refs}

\end{document}